\newtheorem*{rep@theorem}{\rep@title}
\newcommand{\newreptheorem}[2]{%
\newenvironment{rep#1}[1]{%
 \def\rep@title{#2 \ref{##1}}%
 \begin{rep@theorem}}%
 {\end{rep@theorem}}}
\newtheorem{quprime}{Question}
\newtheorem{theorem}{Theorem}[section] 
\newtheorem{proposition}[theorem]{Proposition}
\newtheorem{lemma}[theorem]{Lemma}
\newtheorem{question}{Question}
\newtheorem{conjecture}[theorem]{Conjecture}
\theoremstyle{remark}
\theoremstyle{definition}
\newtheorem{definition}[theorem]{Definition}
\def\N{\mathbb N}
\def\G{\Gamma}
\def\S{\mathbb S}
\def\-{\overline}
\def\wh{\widehat}
\def\epi{{\rm{Epi}}}
\def\P{\mathcal{P}}
\def\VH{\mathcal{VH}}
\def\G{\Gamma}
\def\<{\langle}
\def\>{\rangle}
\begin{document}

\title[On recognizing virtually special cube complexes]{On the recognition problem for virtually special cube complexes}

\author[Martin R. Bridson]{Martin R.~Bridson}
\address{Martin R.~Bridson, Mathematical Institute, University of Oxford, Andrew Wiles Building, Oxford, OX2 6GG, UK}
\email{bridson@maths.ox.ac.uk} 
 
\author[Henry Wilton]{Henry Wilton}
\address{Henry Wilton, DPMMS, Centre for Mathematical Sciences, Wilberforce Road, Cambridge, CB3 0WB, UK}
\email{h.wilton@maths.cam.ac.uk}

\subjclass[2010]{20F10, 20F67, 57M07}

\thanks{Both authors are supported by the EPSRC. Bridson is also supported by a Wolfson Research Merit Award from the Royal Society.}

\maketitle

\def\G{\Gamma}
\def\epi{{\rm{Epi}}}

\begin{abstract}
We address the question of whether the property of being virtually special (in the sense of Haglund and Wise) is algorithmically decidable for finite, non-positively curved cube complexes.  Our main theorem shows that it cannot be decided  by examining one hyperplane at a time. Specifically, we prove that there does not exist an algorithm that,
given a compact non-positively curved squared 2-complex $X$ and a hyperplane $H$ in $X$, will decide whether or not there is a finite-sheeted cover of $X$ in which no lift of $H$ 
self-osculates. 
\end{abstract}

\section{Introduction}

Haglund and Wise's concept of a \emph{special cube complex} \cite{haglund_special_2008} has had profound ramifications in geometric group theory and low-dimensional topology, most notably in Agol's solution to the Virtually Haken Conjecture \cite{agol_virtual_2013}.  A compact, non-positively curved cube complex is {\em special} if it admits a local isometry
to the canonical cube complex associated to a right-angled Artin group. When phrased thus, this is not a condition that
one expects to be able to check in a practical manner, given a compact cube complex $X$. But Haglund and Wise proved that being special is
equivalent to a purely combinatorial condition which requires that each of the finitely many hyperplanes in $X$ should not behave in one of a small number of forbidden ways:
a hyperplane should not cross itself, it should be 2-sided, it should not self-osculate, and it should not inter-osculate with
another hyperplane (see \cite{haglund_special_2008} for details). We say that a
hyperplane is {\em clean} if it does not exhibit any of these forbidden behaviours.  

Given the combinatorial nature of these conditions, it is easy to check if a given hyperplane is clean
and hence whether a given compact cube complex is special or not. However, in applications, it is not specialness that matters so
much as {\em virtual specialness},  i.e. whether the complex
has a finite-sheeted covering space that is special.  
Agol \cite[p. 33]{agol_virtual_????} has promoted the study of the following question.

\begin{question}\label{qu: VS decidable}
Is there an algorithm that, given a  finite, non-positively curved cube complex, will determine
whether or not that complex is virtually special?
\end{question}

This is equivalent to the question:

\begin{quprime}\label{qu: VS decidable'}
Is there an algorithm that, given a  finite, non-positively curved cube complex $X$ (whose
hyperplanes are $H_1,\dots,H_n$, say), will determine the existence of a finite-sheeted covering
$\widehat{X}\to X$ in which each lift of each $H_i$ is clean.
\end{quprime}

One has to think clearly for a moment to see that this question
is not equivalent to:

\begin{question}\label{qu: finite collection}
Is there an algorithm that, given a  finite, non-positively curved cube complex $X$ and a collection
of hyperplanes $H_1,\dots,H_n$, will determine the existence of a finite-sheeted covering
$\widehat{X}\to X$ in which each lift of each $H_i$ is clean.
\end{question}

One's first thought is that a positive solution to Question 1${}^\prime$ should proceed
via a positive solution to Question \ref{qu: finite collection}. But on reflection one realises that the
latter would be significantly more powerful because certain of the complexes $X$ for which one
did establish the existence of $\widehat X$ would not be virtually special. Focusing on this difference,
we shall answer Question \ref{qu: finite collection} in the negative.  The following theorem is an immediate consequence of our main technical result, Theorem \ref{thm: Main theorem}.

\begin{theorem} \label{thm: Intro main}
There does not exist an algorithm that, given a finite, non-positively curved cube complex $X$ and a  
hyperplane $H$ in $X$, can determine whether or not  there exists a finite-sheeted covering
$\widehat{X}\to X$ in which each lift of $H$ is clean.
\end{theorem}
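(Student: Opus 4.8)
The plan is to reduce the self-osculation recognition problem to the triviality problem for profinite completions, which is known to be undecidable by the authors' earlier work \cite{bridson_triviality_2013}.

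First I would recall the key construction: given a finitely presented group $G$, one can build (effectively, from a presentation) a compact non-positively curved square complex $X_G$ together with a distinguished hyperplane $H$, engineered so that the behaviour of $H$ in finite covers is governed by the finite quotients of $G$. The natural mechanism is the following: take the hyperplane $H$ to be associated (via a graph-of-spaces or ``tube'' construction) to an element $g \in \pi_1 X_G$ whose image generates a subgroup detecting $G$, so that $H$ fails to self-osculate in some finite cover $\widehat X \to X_G$ precisely when $g$ is separated from the relevant coset by a finite-index subgroup — equivalently, precisely when $\widehat{G}$ (or the relevant profinite data) is non-trivial. More concretely, one arranges that $H$ always self-osculates in $X_G$ itself, that this self-osculation is ``carried'' by a loop representing a conjugate of $g$, and that a finite cover kills the self-osculation of every lift of $H$ if and only if $G$ has a proper finite-index subgroup avoiding a prescribed coset; by choosing $g$ appropriately this becomes equivalent to $\widehat G \neq 1$, i.e.\ to $G$ having a non-trivial finite quotient.

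Next I would invoke the main result of \cite{bridson_triviality_2013}: there is a recursive sequence of finite presentations of groups $G_i$ for which it is undecidable whether $\widehat{G_i}$ is trivial (equivalently, whether $G_i$ has a proper subgroup of finite index). Applying the construction above to this sequence yields a recursive sequence of pairs $(X_i, H_i)$, and by design there exists a finite cover of $X_i$ in which every lift of $H_i$ is clean (indeed non-self-osculating) if and only if $\widehat{G_i}$ is non-trivial. Since the latter is undecidable, so is the former, which is exactly the statement of the theorem. I would also need to check that in these covers none of the \emph{other} forbidden behaviours (self-crossing, one-sidedness, inter-osculation) are introduced by $H_i$, or alternatively phrase the construction so that $H_i$ only ever risks self-osculating — this is a matter of choosing the local model for the hyperplane carefully (e.g.\ a single square tube glued along an embedded loop in a graph), and I expect it to be routine once the framework is set up.

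The main obstacle, and the technical heart of the argument, is the \emph{encoding step}: building $X_G$ so that the dictionary ``finite cover with clean lift of $H$'' $\leftrightarrow$ ``non-trivial finite quotient of $G$'' is tight in \emph{both} directions. The forward direction (a non-trivial finite quotient produces a good cover) is typically straightforward — pull back along the quotient map and unwrap the offending loop. The reverse direction is the delicate one: one must ensure that \emph{no} finite cover can accidentally clean up all lifts of $H$ for reasons unrelated to the profinite completion of $G$ — for instance, by exploiting covers that come from the ambient square complex rather than from $G$ itself. This requires controlling $\pi_1 X_G$ well enough to see that the relevant separability condition really does force a finite quotient of $G$; the standard remedy is to build $X_G$ as an aspherical complex with $\pi_1 X_G$ closely related to (e.g.\ retracting onto, or a controlled extension of) $G$, so that finite-index subgroups of $\pi_1 X_G$ restrict to finite-index data on the $G$-part. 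Getting this retraction-style control while keeping the complex non-positively curved and keeping $H$ a genuine hyperplane with a single predictable pathology is where the work lies.
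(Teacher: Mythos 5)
Your overall strategy matches the paper's: encode a finitely presented group $G$ into a compact non-positively curved $\VH$-complex with a distinguished hyperplane $H$, arrange that removing the self-osculation of $H$ in a finite cover corresponds to separating a chosen loop from the identity in a finite quotient, and then invoke the undecidability of profinite triviality from \cite{bridson_triviality_2013}. You also correctly identify the hard direction — showing that no ``accidental'' finite cover can clean up $H$ for reasons unrelated to the finite quotients of $G$ — and the paper handles it exactly as you anticipate, via a retraction $\rho:X_n\to L_n$ of the ambient complex back onto the Kan--Thurston-style piece $L_n = J_{\P_n}$.

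However, there is a concrete gap in your reduction. You assert that ``by choosing $g$ appropriately'' the virtual cleanness of $H_i$ becomes \emph{equivalent} to $\widehat{G_i}\neq 1$. This cannot be arranged with a single element $g$: for a general finitely presented $G$ with $\widehat G\neq 1$ there is no canonical element that is guaranteed to survive in some finite quotient (consider $G=A\times\Z/2$ with $\widehat A=1$: the element $(a,0)$ dies but $\widehat G\neq 1$). Consequently there is no many-one reduction from profinite triviality to the virtual-cleanness problem for a single fixed hyperplane. The paper circumvents this by giving a \emph{Turing} reduction instead: it builds a pointed $\VH$-pair $(L_n,V_n,*_n)$ with $\pi_1 V_n$ surjecting onto $\Gamma_n$ (after composing with $\phi_*$), enumerates the set of \emph{pairs} $(n,\gamma)$ with $\gamma$ a simple loop in $V_n$, and shows that if one could decide virtual cleanness for each such loop then running the hypothetical algorithm on each of the finitely many generating loops $\gamma_1,\ldots,\gamma_k$ of $\pi_1 V_n$ would decide whether $\widehat\Gamma_n\neq 1$. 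The final sequence $(X_m,Y_m)$ is then indexed by pairs $(n,\gamma)$, not by $n$ alone. Your proposal would work if you reorganised it to index your constructions by pairs (group, generator) rather than by the group alone, and replaced the claimed equivalence with this ``apply the algorithm to each generator in turn'' argument.
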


Note that this theorem is equivalent to the corresponding statement with `each' replaced by `some': this is clear if $\widehat{X}\to X$ is normal, and since the lift of a clean hyperplane is clean, we are free to pass to a finite-sheeted normal covering.

The main ingredient in our proof of Theorem \ref{thm: Main theorem} (and hence Theorem \ref{thm: Intro main}) is our work in \cite{bridson_triviality_2015}, in which we showed that there is no algorithm that determines whether or not a finite complex has a proper finite-sheeted cover (see Theorem \ref{thm: FP profinite triviality}).  In \cite{bridson_triviality_2015} we also proved that this undecidability persists in the setting of finite, non-positively curved cube complexes (see Theorem \ref{thm: NPC profinite triviality}). The point of this paper is to note that, with care, the same procedure can be made to translate the non-existence of a proper finite-sheeted cover into the failure of a certain hyperplane to be virtually clean.

The obstruction to cleanness that we focus on to prove Theorem \ref{thm: Main theorem} is self-osculation. Our construction shows that the question of whether one can remove self-osculations in finite-sheeted covers  is already undecidable in dimension 2, and this is the context in which we shall work for the remainder of this paper. Although this theorem does not settle Question \ref{qu: VS decidable}, it strongly constrains the nature of any possible positive solution: a putative recognition algorithm would have to take account of all the hyperplanes of $X$ simultaneously.  We therefore believe that Question \ref{qu: VS decidable} has a negative solution:

\begin{conjecture}
Virtual specialness is an undecidable property of non-positively curved cube complexes. 
\end{conjecture}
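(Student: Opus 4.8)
I would reduce the problem to the undecidability of triviality of profinite completions. By \cite{bridson_triviality_2013}, there is no algorithm that, given a finite presentation of a group $G$ lying in a sufficiently rich recursive class --- one may take the class of fundamental groups of compact non-positively curved square complexes --- decides whether $\widehat G$ is trivial, equivalently whether $G$ has a proper subgroup of finite index. Since $\widehat G$ is trivial precisely when every generator of the presentation has trivial image in every finite quotient of $G$, it follows that there is no algorithm which, given a compact non-positively curved square complex $Y$ and an element $\gamma\in\pi_1 Y$ named by an edge-loop, decides whether $\gamma$ has non-trivial image in some finite quotient of $\pi_1 Y$. I would prove the theorem by attaching to each such pair $(Y,\gamma)$ a gadget, producing a compact non-positively curved square complex $X\supseteq Y$ with $\pi_1 X=\pi_1 Y$ together with a distinguished hyperplane $H=H_{Y,\gamma}$, so that some finite-sheeted covering of $X$ has every lift of $H$ clean if and only if $\gamma$ survives in some finite quotient of $\pi_1 X$.

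\textbf{Translating cleanness into finite quotients.} The bridge is the combinatorial description of cleanness of Haglund and Wise \cite{haglund_special_2008}. For a finite \emph{normal} cover, corresponding to $N\trianglelefteq\pi_1 X$ of finite index, the deck group permutes the lifts of $H$ transitively, so by the remark following the theorem it suffices to clean one of them. If $H$ is already embedded, two-sided and does not cross itself, then each of its remaining failures of cleanness --- direct or indirect self-osculations, and inter-osculations with other hyperplanes --- occurs at a vertex and is \emph{witnessed} by a definite conjugacy class in $\pi_1 X$, obtained by following a path that leaves the vertex along one dual edge, runs through a hyperplane carrier, and returns along the other; such a failure is removed in the cover determined by $N$ precisely when a representative of its witnessing class lies outside $N$. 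Hence, provided the construction can be arranged so that $H$ is embedded, two-sided, non-self-crossing, and \emph{all} of its cleanness defects are witnessed by the conjugacy classes of $\gamma$ and $\gamma^{-1}$ for a single $\gamma\in\pi_1 X$, the existence of a finite-sheeted cover in which every lift of $H$ is clean is equivalent to $\gamma$ surviving in some finite quotient of $\pi_1 X$, and the theorem follows from the first paragraph.

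\textbf{The gadget.} Given $(Y,\gamma)$, I would choose an embedded edge-loop $\alpha=f_1\cdots f_\ell$ in (a suitable enlargement of) $Y$ representing $\gamma$, and form $X$ by gluing to $Y$ along $\alpha$ a strip $\Sigma$ of $\ell$ squares, identifying one of the two combinatorial arcs bounding $\Sigma$ with the circle $\alpha$. Since $\Sigma$ deformation retracts onto that arc, $\pi_1 X=\pi_1 Y$. The midcube of $\Sigma$ parallel to this arc is a hyperplane $H$ whose carrier is the image of a simply connected $I$-bundle, so $H$ is embedded, two-sided, and does not cross itself. Away from the vertices of $\alpha$ the hyperplane $H$ misbehaves nowhere, and at the ends of $\Sigma$ the two edges dual to $H$ osculate --- a self-osculation with witness $\gamma$ --- while each of them osculates with the hyperplane through the opposite end edge of $\Sigma$, giving two inter-osculations with witnesses $\gamma^{\pm1}$; one checks (using that hyperplanes in CAT(0) cube complexes neither self-osculate nor inter-osculate) that these persist in the cover of $N$ exactly when $\gamma\in N$. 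The substance of the construction is to choose $(Y,\alpha)$ so that no \emph{further} osculations arise at the vertices of $\alpha$ --- in particular so that $H$ crosses no hyperplane dual to an extra edge of $Y$ incident to $\alpha$ whose associated witness lies outside $\langle\gamma\rangle$ --- whence every cleanness defect of $H$ is witnessed by $\gamma^{\pm1}$, as required above.

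\textbf{The main difficulty.} The conceptual reduction, once the dictionary of the second paragraph is in place, is short; the work is geometric. The harder parts will be (i) confirming that the groups in the profinite-triviality construction of \cite{bridson_triviality_2013} can be realised as fundamental groups of compact non-positively curved square complexes, with the named generating elements realised by loops amenable to the gadget; and (ii) verifying that, after the enlargement of $Y$ alluded to above, the attached hyperplane $H$ acquires no stray cleanness defect --- no extra inter-osculation, no self-crossing --- beyond the handful whose witnesses are $\gamma$ and $\gamma^{-1}$, and that those stray defects which cannot be eliminated outright can nonetheless be removed together with the self-osculation in a single finite cover. It is in controlling these potential extra defects, so that the virtual cleanness of $H$ is governed by the one undecidable condition ``$\gamma$ survives in a finite quotient'', that the real effort lies.
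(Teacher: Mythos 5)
The statement you were asked to address is a \emph{conjecture}: the paper does not prove that virtual specialness is undecidable, and your argument does not prove it either. What your argument aims at---and what the paper actually establishes (Theorem A)---is a strictly weaker, ``local'' statement: there is no algorithm which, given a compact non-positively curved square complex $X$ \emph{and a hyperplane} $H$ in $X$, decides whether there is a finite-sheeted cover of $X$ in which the lifts of that one $H$ are clean. Your gadget (gluing a subdivided strip along an embedded loop representing $\gamma$, so that one of the two pushing maps identifies the endpoints of the hyperplane and thereby creates a self-osculation that is removable precisely when $\gamma$ survives in a finite quotient) is essentially the paper's construction, modulo cosmetic differences (the paper glues an annulus between two copies of $L'_n$ rather than a single strip to one copy, which makes the hyperplane a circle and keeps the bookkeeping cleaner).

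The genuine gap is the jump from ``$H$ is virtually clean'' to ``$X$ is virtually special''. Virtual specialness requires a \emph{single} finite cover in which \emph{every} hyperplane becomes clean simultaneously, not a cover cleaning the one distinguished $H$. In fact, in the examples both you and the paper construct this jump is maximally false: the complexes contain copies of a Wise/Burger--Mozes complex $J$ with $\pi_1 J$ infinite and without proper finite quotients, so $\pi_1 X$ is never residually finite and $X$ is \emph{never} virtually special. Thus for the family you build, the set $\{n : X_n \text{ is virtually special}\}$ is empty and hence trivially decidable; the undecidability you exhibit lives entirely in the local question. The paper is explicit about this: it distinguishes Question~1$'$ (virtual specialness) from Question~2 (cleaning a \emph{given} finite set of hyperplanes), states the Main Theorem only for a single hyperplane, and remarks that the $X_n$ are not residually finite and therefore are not themselves candidates to answer the global question. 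To prove the conjecture one would need a different encoding, one in which the undecidable condition controls all hyperplanes at once in a family of complexes that \emph{can} be virtually special; your reduction does not supply that, and neither does the paper---that is why it is left as a conjecture.
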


\section{$\mathcal{VH}$-complexes}

Rather than working with non-positively curved cube complexes in full generality, we restrict attention to the class of \emph{$\mathcal{VH}$-complexes}.

\begin{definition}
A square complex is \emph{$\mathcal{VH}$} if there is a partition of the edges of the 1-skeleton into \emph{vertical} and \emph{horizontal} subgraphs, such that the boundary cycle of each 2-cell crosses vertical and horizontal edges alternately.
\end{definition}  

We refer the reader to \cite{bridson_$scr_1999,wise_complete_2007} for the basic theory of $\mathcal{VH}$-complexes, but we recall some salient facts here.  In common with all cube complexes, much of the geometry of a $\mathcal{VH}$-complex $X$ is encoded in its set of \emph{hyperplanes} $Y\subseteq X$.  (A hyperplane is a maximal connected union of midcubes.)   We are concerned with certain regularity hypotheses on the hyperplanes.

\begin{definition}
Let $Y$ be a hyperplane embedded in a $\mathcal{VH}$-complex $X$. The union of all the open cells intersecting $Y$, together with the closest-point projection map, defines a bundle $\mathring{N}\to Y$ with fibre $(0,1)$, which has a natural completion to an $I$-bundle denoted by $N\to Y$ equipped with a map $\iota:N\to X$ extending the inclusion of $\mathring{N}$ into $X$. If $N$ is a trivial bundle then $Y$ is called \emph{non-singular} (or \emph{2-sided}).  In this case, $N\cong Y\times [0,1]$, and the two maps
\[
\partial_t:y\mapsto \iota(y,t)
\]
for $t=0,1$, are called the \emph{pushing maps} associated to $Y$.

The hyperplane $Y$ is called \emph{clean} if the pushing maps $\partial_0,\partial_1$ are both embeddings.
\end{definition}

If every hyperplane is non-singular then the complex $X$ is \emph{non-singular}, and if every hyperplane is clean then the whole complex $X$ is called \emph{clean}. The
following theorem of Haglund and Wise reconciles this terminology with that of virtually special complexes.

\begin{theorem}\cite[Theorem 5.7]{haglund_special_2008}
A $\mathcal{VH}$-complex is virtually special if and only if it is virtually clean.
\end{theorem}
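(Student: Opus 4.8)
The plan is to prove the two implications separately, reducing each to properties of finite covers together with the combinatorial characterisation of specialness due to Haglund and Wise: a non-positively curved cube complex is special precisely when its hyperplanes are embedded and $2$-sided, no hyperplane self-osculates, and no two hyperplanes inter-osculate. Two preliminaries are used throughout. First, a finite-sheeted cover of a $\mathcal{VH}$-complex is again a $\mathcal{VH}$-complex, since the partition of the $1$-skeleton into vertical and horizontal edges pulls back and each $2$-cell maps isomorphically onto a $2$-cell downstairs, so its boundary cycle again alternates. Second, specialness and cleanness both pass to finite covers; for cleanness this is because the preimage of an embedded subcomplex under a covering map is a disjoint union of embedded subcomplexes, applied to the images $\partial_0(Y),\partial_1(Y)$ of the pushing maps.

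For \emph{virtually special $\Rightarrow$ virtually clean} it is enough, by the first preliminary, to show that a special $\mathcal{VH}$-complex is clean; then a special finite cover of the given complex is automatically clean. This is straightforward: specialness makes every hyperplane embedded, $2$-sided and non-self-osculating, and for a $2$-sided hyperplane $Y$ (so that $N(Y)\cong Y\times[0,1]$ and the pushing maps are defined) the conditions ``no self-crossing'' and ``no self-osculation'' are exactly what is needed for $\partial_0,\partial_1$ to be embeddings.

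For \emph{virtually clean $\Rightarrow$ virtually special}, after replacing the given complex by a clean finite cover we may assume $X$ is itself a compact clean $\mathcal{VH}$-complex, and we must exhibit a special finite cover of $X$. Of the Haglund--Wise conditions, three hold in $X$ already: no square has two of its midcubes in the same hyperplane, since in a $\mathcal{VH}$-complex the two midcubes of a square lie on hyperplanes of opposite type (one dual to vertical edges, one to horizontal), so every hyperplane is embedded; every hyperplane is $2$-sided, by the definition of cleanness; and no hyperplane self-osculates, since its pushing maps are embeddings. The only remaining obstruction is inter-osculation, so it suffices to find a finite cover of $X$ in which no two hyperplanes both cross and osculate. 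For this I would use separability. Cutting $X$ along its vertical hyperplanes — which are embedded and $2$-sided since $X$ is clean — exhibits $\pi_1 X$ as the fundamental group of a finite graph of groups whose vertex groups (fundamental groups of the components of the cut-open complex, each deformation retracting onto a union of horizontal edges) and edge groups (fundamental groups of the vertical hyperplanes, which are graphs) are finitely generated and free; moreover each hyperplane carrier $N(Y)$, and each pairwise intersection $N(Y_1)\cap N(Y_2)$, is a locally convex — hence $\pi_1$-injective — subcomplex, so determines a finitely generated subgroup of $\pi_1 X$. Invoking the appropriate subgroup-separability results for such graphs of finitely generated free groups — or, equivalently, constructing the covers directly by a folding argument in the associated graph of groups — one produces, for each inter-osculating pair $Y_1,Y_2$, a finite cover in which the squares witnessing a crossing of lifts of $Y_1,Y_2$ and the edges witnessing an osculation are separated into different sheets; a common finite cover then handles all pairs simultaneously. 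That cover is clean (cleanness lifts) and has no inter-osculation, hence is special, so $X$, and therefore the original complex, is virtually special.

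The step I expect to be the main obstacle is the last one: pinning down exactly which separability statements about the hyperplane subgroups of the graph-of-free-groups $\pi_1 X$ are required, verifying them, and then orchestrating a single finite cover that removes every inter-osculating pair at once without recreating self-osculations or introducing new inter-osculations elsewhere. The remaining ingredients — the two preliminaries, the $\mathcal{VH}$ bookkeeping, and the ``easy'' implication — are routine given the Haglund--Wise characterisation.
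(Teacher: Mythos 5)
There is no ``paper's own proof'' to compare against here: in this paper the statement is quoted verbatim as \cite[Theorem 5.7]{haglund_special_2008} and used as a black box, so the only question is whether your sketch would actually constitute a proof. It would not, because the step you yourself flag as ``the main obstacle'' is not a technical loose end --- it is essentially the entire content of the theorem. Your easy direction (special $\Rightarrow$ clean, plus the observations that the $\mathcal{VH}$ structure and cleanness pass to finite covers, and that hyperplanes in a $\mathcal{VH}$-complex are automatically embedded because the two midcubes of a square are of opposite types) is fine and routine. But the reduction of the hard direction to ``invoke the appropriate subgroup-separability results for graphs of finitely generated free groups'' is not a valid move: no such general separability theorem exists. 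Indeed, this very paper is built on counterexamples to it --- Wise's complete square complex $J$ (Theorem \ref{thm: Wise CSC}) and the Burger--Mozes complexes are compact $\mathcal{VH}$-complexes, hence their fundamental groups are exactly graphs of finitely generated free groups of the kind you describe, yet $\pi_1 J$ has \emph{no} proper finite-index subgroups at all, so its hyperplane subgroups are as far from separable as possible. Any correct argument must therefore use cleanness in an essential way (this is what Haglund and Wise, building on Wise's work on clean $\mathcal{VH}$-complexes, actually do), and you give no indication of how cleanness buys the separability you need.

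A second, smaller gap in the same step: even granting separability of the hyperplane-carrier subgroups, plain subgroup separability is not the right statement for killing inter-osculation. The standard criteria (cf.\ the separability criteria in \cite{haglund_special_2008}) require separability of appropriate \emph{double cosets} of hyperplane subgroups, or the canonical completion and retraction machinery, to produce a finite cover in which a crossing pair of hyperplanes no longer osculates; and one must also check that the cover chosen to fix one pair does not reintroduce problems for another, which is precisely the ``orchestration'' you defer. So the proposal is a reasonable outline of the known strategy, but as written it assumes the theorem's substance rather than proving it.
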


Thus, we may specialize Question \ref{qu: VS decidable} to the case of $\mathcal{VH}$-complexes as follows.

\begin{question}\label{qu: VC decidable}
Is there an algorithm that decides whether or not a given finite $\mathcal{VH}$-complex is virtually clean? 
\end{question}

As in the introduction, our focus here is a \emph{local} version of this question.   We call a hyperplane $Y$ in a $\mathcal{VH}$-complex $X$ \emph{virtually clean} if there exists a finite-sheeted covering map $p:\widehat{X}\to X$ and a connected component $\widehat{Y}\subseteq p^{-1}Y$ such that the hyperplane $\widehat{Y}$ of $\widehat{X}$ is clean.  Our main theorem can now be stated as follows.

\begin{theorem}\label{thm: Main theorem}
There is a recursive sequence of pairs of compact $\mathcal{VH}$-complexes $X_n$ and hyperplanes $Y_n\subseteq X_n$ such that the set of natural numbers $n$ for which $Y_n$ is virtually clean is recursively enumerable but not recursive.
\end{theorem}

The complexes $X_n$ that we shall construct to prove Theorem \ref{thm: Main theorem} are never (globally) virtually clean---indeed, $\pi_1X_n$ is not residually finite---so they are not themselves candidates to answer Question \ref{qu: VC decidable}.

\section{Undecidability and finite covers}

The proof of Theorem \ref{thm: Main theorem} relies on our recent work on the triviality problem for profinite completions, in which we proved the following theorem \cite[Theorem B]{bridson_triviality_2015}.

\begin{theorem}\label{thm: NPC profinite triviality}
There is a recursive sequence of compact, non-positively curved square complexes $X_n$ such that the set
\[
\{n\in\mathbb{N}\mid \widehat{\pi_1X_n}\ncong 1\}
\]
is recursively enumerable but not recursive.
\end{theorem}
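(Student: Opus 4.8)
The plan is to import the unsolvability from the triviality problem for finitely presented groups and to pass into the cubical world via the Rips construction, using the Platonov--Tavgen criterion to convert ``the seed group is trivial'' into ``the fundamental group has a non-trivial finite quotient''. Throughout, the guiding principle is that the set $\{n : \widehat{\pi_1 X_n}\ne 1\}$ is automatically recursively enumerable for any recursive family of finite presentations, since one can enumerate all finite groups together with all homomorphisms out of $\pi_1 X_n$; so the only real content is to make this set non-recursive, and to do so while staying inside the class of fundamental groups of compact non-positively curved square complexes.

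\emph{Step 1: the seed groups.} I would first produce a recursive sequence of finite presentations of groups $Q_n$ that are simultaneously (i) of type $\mathrm{F}_3$ (indeed one can ask for $\mathrm{F}_\infty$), (ii) perfect with $H_2(Q_n;\mathbb Z)=0$, and (iii) without non-trivial finite quotients, and for which the set $\{n : Q_n\cong 1\}$ is recursively enumerable but not recursive. The undecidability is the Adian--Rabin theorem (triviality of a finite presentation is r.e.\ but not recursive); the extra regularity is achieved by ``coating'' the crude Adian--Rabin groups, i.e.\ by passing algorithmically to a superperfect group of type $\mathrm{F}_\infty$ with no non-trivial finite quotients and the same triviality status, using acyclic-completion and right-angled Artin group techniques.

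\emph{Step 2: Rips construction and fibre products.} To each $Q_n$ I would apply a version of the Rips construction, in the form due to Wise that produces non-positively curved square complexes: this yields, algorithmically in $n$, a compact $\mathcal{VH}$-complex $Z_n$ with $\pi_1 Z_n = G_n$ fitting in a short exact sequence $1\to N_n\to G_n\xrightarrow{\phi_n} Q_n\to 1$ with $N_n$ finitely generated. Form the fibre product $P_n = G_n\times_{Q_n} G_n\le G_n\times G_n$. Since $Q_n$ is of type $\mathrm{F}_3$, the $1$--$2$--$3$ theorem of Baumslag--Bridson--Miller--Short shows $P_n$ is finitely presented; exploiting the explicit $\mathcal{VH}$-form of the relators, one then realises $P_n$ as $\pi_1 X_n$ for a compact non-positively curved square complex $X_n$ produced recursively from $n$. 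Because $Q_n$ has no non-trivial finite quotients and $H_2(Q_n;\mathbb Z)=0$, the Platonov--Tavgen criterion gives $\widehat{P_n}\cong \widehat{G_n}\times\widehat{G_n}$, so that $\widehat{\pi_1 X_n}\ne 1$ if and only if $\widehat{G_n}\ne 1$. The final point is to arrange the Rips construction so that $\widehat{G_n}\ne 1$ precisely when $Q_n\cong 1$: for the trivial seed the kernel generators survive to an obvious finite (e.g.\ cyclic) quotient, whereas for $Q_n\not\cong 1$ the relations entangling the kernel generators with the finite-quotient-free group $Q_n$ are chosen so that $G_n$ inherits the absence of finite quotients. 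Granting this, $\{n : \widehat{\pi_1 X_n}\ne 1\} = \{n : Q_n\cong 1\}$, which is recursively enumerable but not recursive.

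\emph{The main obstacle} is the delicate interplay in the last part of Step 2: one must control the profinite completion of the Rips output --- and hence of the fibre product --- finely enough that it tracks the triviality of the seed in the correct, recursively enumerable direction, while simultaneously keeping everything inside the class of fundamental groups of compact non-positively curved square complexes. Turning the abstract finite presentation of $P_n$ supplied by the $1$--$2$--$3$ theorem into an honest compact $\mathcal{VH}$-complex whose links are genuinely flag, and checking that the entire recipe (coating $\to$ Rips $\to$ fibre product $\to$ $\mathcal{VH}$-complex) is uniformly algorithmic, are the other points where the bulk of the technical work lies.
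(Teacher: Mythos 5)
There is a genuine gap, and it sits exactly at the point you flag as the ``main obstacle''. Your Step 2 needs the Rips quotients $G_n$ to satisfy $\widehat{G_n}\neq 1$ precisely when $Q_n\cong 1$, but no mechanism is offered for the hard half of this, and the known implementations go the other way: Wise's square-complex version of the Rips construction produces \emph{residually finite} groups, so $\widehat{G_n}\neq 1$ for every $n$ (the kernel $N_n$ is non-trivial, hence $G_n\neq 1$). The point is that a finite quotient of $G_n$ does not die merely because $Q_n$ has no non-trivial finite quotients: it can be generated by the images of the kernel generators (typically $G_n$ already has non-trivial abelianization). Manufacturing hyperbolic or non-positively curved square groups with trivial profinite completion ``on demand'' over an arbitrary seed is precisely the deep difficulty here; the only NPC square groups known to have no proper finite quotients are fixed, special examples (Wise's complete square complexes, Burger--Mozes), not Rips images of prescribed $Q_n$. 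Consequently, with any construction you could actually carry out, the Platonov--Tavgen isomorphism $\widehat{P_n}\cong\widehat{G_n}\times\widehat{G_n}$ gives $\widehat{\pi_1X_n}\neq 1$ for all $n$, and your set is all of $\mathbb{N}$ --- recursive. A second unsupported step is realising the fibre product $P_n$ as $\pi_1$ of a compact non-positively curved square complex: the 1--2--3 theorem yields only a finite presentation, and finitely presented subgroups of NPC (even special) groups are in general not themselves NPC groups --- indeed these fibre products are chosen to be pathological.

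For comparison, the paper does not attempt either of these moves. It imports the finitely presented version of the statement (Theorem \ref{thm: FP profinite triviality}, the main theorem of \cite{bridson_triviality_2013}, whose proof requires a considerably more delicate encoding than Adian--Rabin plus ``coating''), and then achieves non-positive curvature by a different device: given the presentation $\P_n$ of $\Gamma_n$, it attaches copies of Wise's compact $\mathcal{VH}$-complex $J$ (with $\pi_1J$ infinite and without proper finite quotients) to the presentation complex along cylinders --- the $J_{\P}$ construction of Proposition \ref{prop: Properties of J}. Every homomorphism from $\pi_1J_{\P_n}$ to a finite group then factors through the surjection $\phi_*$ onto $\Gamma_n$, so $\widehat{\pi_1X_n}\cong\widehat{\Gamma}_n$ with $X_n=J_{\P_n}$ a compact NPC $\mathcal{VH}$-complex, and the undecidability transfers verbatim. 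If you want to salvage your outline, you should separate the two tasks as the paper does: first establish the finitely presented statement (this is where fibre products and the asymmetric Platonov--Tavgen-type criterion genuinely enter, but arranged so that triviality of the completion is forced by the structure of the construction rather than by an unproven property of Rips quotients), and then use the $J_\P$-style gluing, not the Rips/fibre-product machinery, to land in the class of NPC square complexes.
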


Here, $\widehat{\Gamma}$ denotes the profinite completion of a group $\Gamma$. 
By definition, $\widehat{\Gamma}$ is trivial if and only if every finite quotient of $\Gamma$ is trivial.

The proof of this theorem begins with a non-positively curved square complex $J$ that has no proper finite-sheeted covers.  With a careful choice of $J$, we can ensure that, in the output sequence $(X_n)$, each $X_n$ is a $\mathcal{VH}$-complex.   The main result of this section is Theorem \ref{thm: Tech}, a slight refinement of Theorem \ref{thm: NPC profinite triviality}. We will start by explaining the construction that proves Theorem \ref{thm: NPC profinite triviality}, and then observe some additional properties of the examples that we use to deduce Theorem \ref{thm: Tech}.

The first ingredient in the proof of Theorem \ref{thm: NPC profinite triviality} is the corresponding result for finitely presented groups (without any non-positive curvature condition).

\begin{theorem}\label{thm: FP profinite triviality}
There is a recursive sequence of finite presentations $\P_n$ of groups $\Gamma_n$ such that the set
\[
\{n\in\mathbb{N}\mid \widehat{\Gamma}_n\ncong 1\}
\]
is recursively enumerable but not recursive.
\end{theorem}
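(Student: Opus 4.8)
For \emph{any} recursive sequence of finite presentations $\mathcal P_n$ of groups $\Gamma_n$, the set $\{n\in\N\mid\widehat{\Gamma}_n\ncong 1\}$ is recursively enumerable: $\widehat{\Gamma}_n\ncong 1$ exactly when $\Gamma_n$ has a nontrivial finite quotient, so one dovetails over all triples $(n,F,\phi)$ with $F$ a finite group and $\phi$ a function from the generators of $\mathcal P_n$ to $F$, checks algorithmically whether $\phi$ respects the relators of $\mathcal P_n$ and has nontrivial image, and emits $n$ whenever both tests pass. Hence the entire content of the theorem is that this set is \emph{not} recursive, and I would obtain this by reducing the classical triviality problem for finitely presented groups.

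\textbf{Set-up of the reduction.} By the Adyan--Rabin theorem there is a recursive sequence of finite presentations $\mathcal Q_n$ of groups $Q_n$ for which $\{n\mid Q_n\cong 1\}$ is recursively enumerable but not recursive; by the usual strengthenings (via Higman's embedding theorem, placing the relevant groups inside ambient groups of prescribed type and homology) one may further assume that each $Q_n$ is of type $F_3$, is superperfect, and --- crucially --- has no nontrivial finite quotient whenever $Q_n\ncong 1$. It then suffices to describe a recursive procedure $Q\rightsquigarrow\Gamma(Q)$ that, from a presentation of such a $Q$, outputs a finite presentation of a group $\Gamma(Q)$ with
\[
\widehat{\Gamma(Q)}\ncong 1 \iff Q\cong 1,
\]
for then $\Gamma_n:=\Gamma(Q_n)$ is the sequence required by the theorem. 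I should emphasise that this cannot be extracted from Rabin's theorem directly: ``having a nontrivial finite quotient'' is not a Markov property of finitely presented groups --- for every finitely presented $A$ the overgroup $A*\Z/2$ has such a quotient --- so the reduction genuinely has to be constructed.

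\textbf{The construction and why it works.} Given $Q$ as above, apply a suitably controlled Rips construction to $\mathcal Q$, producing a short exact sequence $1\to N\to G\to Q\to 1$ with $G$ a finitely presented small-cancellation (hence word-hyperbolic) group and $N$ finitely generated, the auxiliary data being chosen so that $G$ has no nontrivial finite quotient when $Q$ is nontrivial, whereas in the degenerate case $Q\cong 1$ --- where $G$ collapses onto $N$ --- the group $G$ still admits a nontrivial finite quotient. Now take the fibre product
\[
P=\{(g,h)\in G\times G\mid \bar g=\bar h\ \text{in}\ Q\},
\]
which is finitely presented by the $1$-$2$-$3$ Theorem (using that $Q$ is of type $F_3$, $N$ is finitely generated and $G$ is finitely presented), with a presentation computable from $\mathcal Q$; set $\Gamma(Q):=P$. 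If $Q\ncong 1$ then $G$ has no nontrivial finite quotient, so every homomorphism from $P$ to a finite group kills the diagonal copy $\Delta G\le P$; since $P=\Delta G\cdot(N\times 1)=\Delta G\cdot(1\times N)$ and the normal subgroups $N\times 1$ and $1\times N$ commute elementwise, the image of any finite quotient of $P$ is abelian, so $\widehat P$ coincides with the profinite completion of the finitely generated abelian group $H_1(P;\Z)$; feeding $1\to N\times N\to P\to Q\to 1$ into the five-term exact sequence and using that $Q$ is superperfect (together with $G^{\mathrm{ab}}=0$, which follows because a finitely generated abelian group with no nontrivial finite quotient is trivial) forces $H_1(P;\Z)=0$, hence $\widehat P\cong 1$. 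If $Q\cong 1$ then $P=G\times G$ with $G$ possessing a nontrivial finite quotient, so $\widehat P\ncong 1$. This proves the displayed equivalence, and hence the theorem.

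\textbf{The main obstacle.} The serious work is concentrated in the phrase ``suitably controlled'' above: one must run the Rips construction so that, simultaneously, $N$ remains finitely generated (so the $1$-$2$-$3$ Theorem applies), the abelianisation of $G$ is pinned down tightly enough that the five-term computation kills $H_1(P)$ when $Q$ is nontrivial, and yet the degenerate output $G\times G$ still carries a nontrivial finite quotient when $Q$ is trivial --- this last feature being precisely what must distinguish the trivial group from the nontrivial groups without finite quotients in the family. One must also arrange the input sequence $Q_n$ to be uniformly of type $F_3$, superperfect, and without nontrivial finite quotients when nontrivial. Balancing these constraints is the delicate point; everything else --- the recursive enumerability, the homological five-term argument, and the appeal to the $1$-$2$-$3$ Theorem --- is routine once it is in place. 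The conceptual reason the scheme is needed is that, finite quotients being undetectable in Markov fashion, it is only the passage to the fibre product --- where the \emph{absence} of finite quotients of $G$ is converted into a purely homological obstruction --- that turns the undecidability of triviality into the undecidability of having a nontrivial finite quotient.
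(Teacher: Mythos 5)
There is a genuine gap, and it is concentrated exactly where you park it: the ``suitably controlled Rips construction'' with the dichotomy you require does not exist, and cannot be supplied by routine care. First, for every nontrivial $Q$ you need the small-cancellation (hence word-hyperbolic) group $G$ produced by the construction to have \emph{no} nontrivial finite quotient; whether any infinite word-hyperbolic group can fail to have a proper finite-index subgroup is a well-known open problem, so this hypothesis is not currently obtainable at all. Second, and more fundamentally, the construction is a uniform algorithm applied to a presentation, yet you ask its finite-quotient behaviour to toggle according to whether that presentation presents the trivial group --- precisely the undecidable property being reduced. In other words, you have postulated a profinitely sensitive gadget whose existence is essentially the content of the theorem: indeed, granting your hypotheses on $G$, the fibre product $P$ and the five-term computation are redundant, since $\widehat{G}\ncong 1$ if and only if $Q\cong 1$ already finishes the proof. (Secondary points: the strengthened Adyan--Rabin input you invoke --- nontrivial $Q_n$ of type $F_3$, superperfect, with no nontrivial finite quotients --- is itself not a standard off-the-shelf statement and would need an argument; the recursive-enumerability half, the non-Markov observation, and the homological computation are fine as far as they go.)

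For comparison, the paper does not prove this statement internally but quotes it from \cite{bridson_triviality_2013}, and the proof there runs along a genuinely different axis. The key input is Slobodskoi's theorem that the universal theory of finite groups is undecidable; from it one extracts finite presentations together with words $w$ for which the question ``does $w$ survive in some finite quotient?'' is recursively enumerable but not recursive, and the technical heart is an explicit word-to-group encoding turning such a pair into a finitely presented group that has a nontrivial finite quotient precisely when $w$ survives. That encoding uses finitely presented groups with no nontrivial finite quotients (Higman-type groups, and Wise's complexes for the square-complex refinement), but never a hyperbolic group without finite quotients, and it does not attempt to make a uniform construction ``detect'' triviality of the input. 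Since, as you correctly observe, having a nontrivial finite quotient is not a Markov property, some input of this kind beyond Adyan--Rabin is unavoidable; your proposal does not provide one, so the reduction as written cannot be completed.
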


For $J$  we will use a complex that was constructed by Wise.

\begin{theorem}[Wise \cite{wise_complete_2007}]\label{thm: Wise CSC}
There exists a compact $\mathcal{VH}$-complex $J$ such that $\pi_1J$ is infinite but has no proper finite quotients.
\end{theorem}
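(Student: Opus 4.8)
The plan is to construct $J$ by hand as a compact square complex with a $\mathcal{VH}$-structure, and then to establish the two required properties by quite different arguments: infiniteness of $G:=\pi_1 J$ will come from non-positive curvature, while the absence of proper finite quotients will be forced by the combinatorics of how the squares of $J$ are glued. These two properties pull in opposite directions --- the first keeps $G$ large, the second makes it invisible to finite quotients --- and reconciling them is the whole point of the construction.

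For the construction, I would build $J$ as the total space of a graph of graphs: take a finite graph $\Gamma_h$ and, over each of its vertices and edges, a finite \emph{vertical} graph, together with graph immersions of the edge-graphs into the vertex-graphs at their endpoints; gluing in one square for each pair consisting of an edge of $\Gamma_h$ and an edge of its edge-graph yields a compact square complex $J$ whose $1$-skeleton splits into horizontal edges (those of $\Gamma_h$) and vertical edges (those of the vertical graphs), with the boundary of every square alternating between the two, so that $J$ is $\mathcal{VH}$. Choosing the vertical graphs large and the immersions so that no two squares share two consecutive edges (subdividing as needed) makes the link of every vertex of $J$ a simple bipartite graph, which is exactly the condition that $J$ be non-positively curved; in Wise's construction one arranges the stronger property that every link is a \emph{complete} bipartite graph, so that $\widetilde{J}$ is a product of two trees and $G$ is a torsion-free cocompact lattice in a product of two tree-automorphism groups. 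Either way, $G$ acts freely and cocompactly on the unbounded $\mathrm{CAT}(0)$ space $\widetilde{J}$, so $G$ is infinite and torsion-free; moreover, cutting $J$ along a suitable family of hyperplanes presents $G$ as the fundamental group of a graph of groups whose vertex and edge groups are the finitely generated free groups $\pi_1$ of the vertical graphs and of the hyperplanes, with edge maps induced by the chosen immersions, and there is a symmetric splitting in the vertical direction.

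The substance of the theorem, and the main obstacle, is to choose the vertical graphs and immersions so that every finite quotient of $G$ is trivial, subject to the rigid constraint that $G$ must remain a $\mathrm{CAT}(0)$ (indeed product-of-trees) group. One wants to engineer into the gluing pattern a web of relations --- coincidences of generators, conjugacies, and index relations among the free subgroups carried by the two splittings --- that \emph{cascades} in any finite quotient $\bar{G}$: the image of one generator is forced trivial, which forces the next, and so on until $\bar{G}=1$. The subtle point is that one cannot simply try to realise Higman's group, since a $\mathrm{CAT}(0)$ group contains no Baumslag--Solitar subgroup $BS(1,n)$ with $n\ge 2$; the collapse of finite quotients must therefore be produced \emph{softly}, by relations that are harmless inside $G$ itself but arithmetically incompatible in any nontrivial finite group. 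Happily, the graph-of-free-groups structure handles the other half of the balancing act for free: because the vertex and edge groups are free and the edge inclusions may be taken of infinite index, the Bass--Serre tree of $G$ has no global fixed point however intricate the gluings become, so $G$ stays infinite automatically and all the ingenuity can be concentrated on the finite-quotient side. Exhibiting a single finite gluing pattern that simultaneously keeps every vertex link a (complete) bipartite graph, keeps the two splittings nontrivial, and forces the collapse of every finite quotient --- without the Baumslag--Solitar-type relations that would wreck the first two --- completes the proof; Wise's explicit complex does exactly this, and I would follow that blueprint, checking the link condition square-by-square and verifying the no-finite-quotients property by tracing the forced collapse through an arbitrary finite quotient.
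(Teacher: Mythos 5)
The paper does not prove this statement at all: it is imported verbatim from Wise's \emph{Complete square complexes} paper, so there is no internal argument to compare against and the only question is whether your sketch stands on its own. It does not. Everything you actually carry out is the routine half: the graph-of-graphs description of a $\mathcal{VH}$-complex, the link condition (complete bipartite links, universal cover a product of two trees), infiniteness and torsion-freeness from the free cocompact action on an unbounded $\mathrm{CAT}(0)$ space, and the two graph-of-free-groups splittings. None of that is where the theorem lives. The entire content is the other half---exhibiting a \emph{specific} finite gluing pattern and proving that every finite quotient of its fundamental group is trivial---and at exactly that point your text becomes aspirational (``engineer a web of relations \ldots that cascades in any finite quotient'') and then defers: ``Wise's explicit complex does exactly this, and I would follow that blueprint.'' No complex is written down, no mechanism is specified for why the image of even one generator must die in an arbitrary finite quotient, and no such cascade is traced. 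A proof by appeal to the very construction being proved is a citation, not an argument; as a blind proof it has a genuine gap precisely at the theorem's core.

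Your negative remark (that one cannot simply embed Higman-type $BS(1,n)$ relations, since a group acting properly cocompactly on a $\mathrm{CAT}(0)$ space contains no $BS(1,n)$ with $n\ge 2$) is correct and shows you see the tension, but identifying what cannot work does not substitute for the missing positive construction. In Wise's paper the collapse of finite quotients comes from a concrete complete square complex and a delicate analysis (anti-torus phenomena and the interaction of the two tree factors), and verifying it is the hard, non-formal part. If you want a proof rather than a citation, you must either reproduce such a complex and the quotient-collapsing argument in full, or replace it by another explicit mechanism---for instance the Burger--Mozes lattices mentioned in the paper, whose simplicity gives the conclusion, though their proof is harder still.
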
 

Burger and Mozes exhibited compact $\mathcal{VH}$-complexes with the even more striking property that their fundamental groups are infinite and simple \cite{burger_finitely_1997}.

We next perform a construction reminiscent of the well known Kan--Thurston construction.

\begin{definition}
Let $J$ be as above and fix a simple closed loop $c$ in a vertical component of the 1-skeleton of $J$.  Let $K$ be the presentation 2-complex associated to a finite presentation $\mathcal P \equiv \<a_1,\ldots,a_n\mid r_1,\ldots,r_m\>$ of a group $\Gamma$, and let $V:=K^{(1)}$.  We define $J_{\P}$ to be the space obtained by attaching $m$ copies of $J$ to $V$, with the $j$-th copy attached by a cylinder joining $c$ to the edge-loop in $V$ labelled $r_j$. More formally, writing $\rho_j: \mathbb{S}^1\to V$  for this last loop, we define $\sim$ to be the equivalence relation on 
\[
V\coprod \big(\mathbb{S}^1\times [0,1]\big)\times\{1,\dots,m\} \coprod J\times \{1,\dots,m\}
\]
defined by
\[
\forall t\in \mathbb{S}^1\, \forall j\in\{1,\dots,m\}\ :\  \rho_j(t)\sim (t,0,j) {\rm{~and~}} (t,1,j)\sim (c(t),j),
\]
and define $J_\P$ to be the quotient space.
\end{definition}

\begin{figure}[h]
\begin{center}
 \centering \def\svgwidth{300pt}
 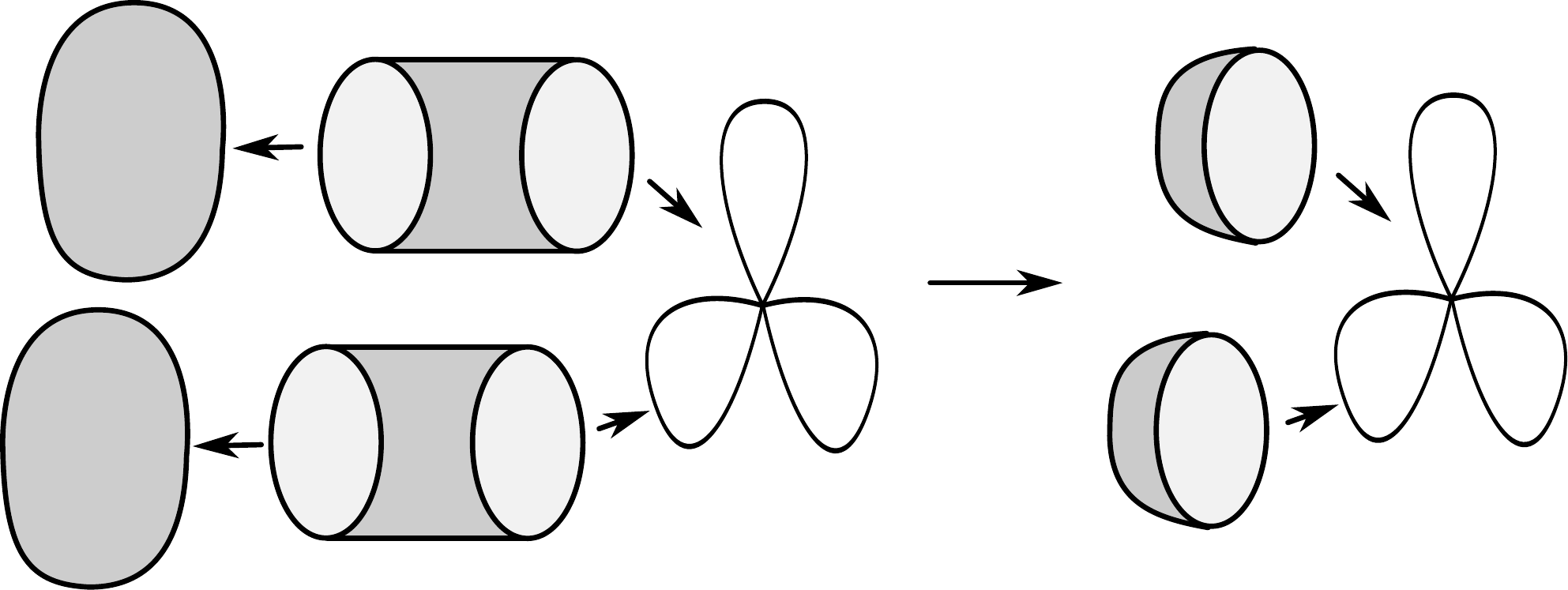
 \caption{The complex $J_\P$ and the map $\phi$.}
 \label{fig: J}
\end{center}
\end{figure}

The construction of $J_\P$ is illustrated in Figure \ref{fig: J}.  The next proposition collects together some easy properties of this construction.

\begin{proposition}\label{prop: Properties of J}
Let $J_\P$ be as above.  Then we have the following properties:
\begin{enumerate}
\item $J_\P$ has the structure of a $\mathcal{VH}$-complex;
\item there is a natural continuous map $\phi:J_\P\to K$, with the property that every homomorphism from $\pi_1J_\P$ to a finite group factors through $\phi_*$;
\item the composition
\[
\pi_1V\to \pi_1J_\P\stackrel{\phi_*}{\to}\Gamma
\]
is surjective.
\end{enumerate}
\end{proposition}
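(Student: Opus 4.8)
The three parts carry very different weight: (1) is a bookkeeping exercise in assembling compatible $\mathcal{VH}$-structures, (2) is the real content and rests entirely on Wise's complex $J$ having no proper finite quotients (Theorem~\ref{thm: Wise CSC}), and (3) is immediate once $\phi$ has been pinned down on $V$. For (1), I would fix the $\mathcal{VH}$-structure on $J$ in which $c$ is an edge-loop lying in a vertical component of $J^{(1)}$, declare every edge of $V=K^{(1)}$ to be vertical, and give each cylinder $\mathbb{S}^1\times[0,1]\times\{j\}$ the structure of a single annular ring of squares whose meridional edges (those running in the $\mathbb{S}^1$-direction, which become identified with the edges of $\rho_j$ at one end and of $c$ at the other) are declared vertical and whose longitudinal edges are declared horizontal; the boundary $4$-cycle of each such square then alternates vertical--horizontal--vertical--horizontal, so the result is $\mathcal{VH}$. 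For this to be literally possible one needs $\rho_j$ and $c$ to have the same combinatorial length, which I would arrange in a preliminary step by replacing $\mathcal P$ with an equivalent finite presentation (obtained recursively by Tietze moves) in which every relator is a word of length exactly $|c|$, subdividing $J$ beforehand if necessary --- note that subdividing each square of a $\mathcal{VH}$-complex into a $2\times2$ grid, with the four new interior edges two-coloured alternately, again yields a $\mathcal{VH}$-complex. This length-bookkeeping is the fiddliest point in the whole argument, but it is routine.

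For (2), first describe $\phi$ explicitly (cf.\ Figure~\ref{fig: J}): on $V\subseteq K$ it is the inclusion; writing $b_j$ for the barycentre of the $2$-cell of $K$ attached along $\rho_j$, the map $\phi$ collapses the entire $j$-th copy of $J$ to $b_j$ and maps the $j$-th cylinder onto that closed $2$-cell by the map which is $\rho_j$ on $\mathbb{S}^1\times\{0\}$, is constant $b_j$ on $\mathbb{S}^1\times\{1\}$, and sweeps radially across the cell in between. These prescriptions agree on the overlaps, so $\phi\colon J_{\P}\to K$ is well defined and continuous. Next compute $\pi_1J_{\P}$ by van Kampen: with the basepoint in $V$, writing $G_j\cong\pi_1J$ for the fundamental group of the $j$-th copy of $J$ and $c_j\in G_j$ for the class of $c$ (joined to the basepoint by the straight path through the cylinder), one obtains
\[
\pi_1J_{\P}\ \cong\ \bigl(F_n * G_1 * \cdots * G_m\bigr)\big/\big\langle\!\langle\,c_jr_j^{-1}\,:\,1\le j\le m\,\rangle\!\big\rangle,
\]
where $F_n=\pi_1V$ is free on $a_1,\dots,a_n$ and $r_j\in F_n$ denotes the relator word. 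Under $\phi_*$ the free factor $F_n$ maps onto $\Gamma=F_n/\langle\!\langle r_j\rangle\!\rangle$ by the defining quotient and each $G_j$ maps to $1$; hence $\phi_*$ is onto $\Gamma$, and $\ker\phi_*$ is the normal closure in $\pi_1J_{\P}$ of $G_1\cup\cdots\cup G_m$ (the relations $c_j=r_j$ render the elements $r_j$ redundant, since $c_j\in G_j$). Finally, let $f\colon\pi_1J_{\P}\to Q$ be any homomorphism to a finite group. Each restriction $f|_{G_j}$ has finite image, hence is a finite quotient of $\pi_1J$, hence is trivial by Theorem~\ref{thm: Wise CSC}; so $f$ kills every $G_j$, hence kills $\ker\phi_*$, and therefore factors through $\phi_*$. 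This step is the crux, and Wise's theorem is its only real ingredient.

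For (3): since $\phi|_V$ is the inclusion $V\hookrightarrow K$, the composite $\pi_1V\to\pi_1J_{\P}\stackrel{\phi_*}{\to}\pi_1K$ is exactly the homomorphism induced by $V\hookrightarrow K$; as $K$ is obtained from $V$ by attaching the $2$-cells of $\mathcal P$, this is the canonical surjection $F_n=\pi_1V\twoheadrightarrow\Gamma=\pi_1K$, and changing basepoint within the connected graph $V$ alters it only by an inner automorphism, so the composite is surjective. In summary, the only step requiring genuine care is the combinatorial bookkeeping in (1); given Wise's theorem, parts (2) and (3) are short.
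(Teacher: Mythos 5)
Your arguments for parts (2) and (3) are correct and match the paper's in substance: the paper's proof of (2) is terser (it simply observes that $\ker\phi_*$ is normally generated by the copies of $J$, without the explicit van~Kampen presentation you give), but the logical content is identical, and (3) is handled the same way. Where you genuinely diverge is in (1), and your proposed length-matching device has a gap as stated. Tietze moves and padding by subwords of the form $aa^{-1}$ only change the combinatorial length of a relator word by even amounts, so relator words of opposite parity cannot all be brought to a common length $|c|$ this way; and the natural patch of introducing an auxiliary generator $b$ with relator $b$ reintroduces a relator of length $1$. One can repair your approach --- for instance, first double every generator by substituting $a_i\mapsto a_i'a_i''$ so that every relator word has even length, then pad --- but this is an extra idea, not automatic.

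The paper's fix is cleaner and worth knowing: rather than modifying $\mathcal P$, one subdivides \emph{each copy of $J$ by a different amount}. Concretely, subdivide every edge of $V$ into $|c|$ pieces, so that $\rho_j$ becomes an edge-loop of length $|c|\cdot|r_j|$; then subdivide the $j$-th copy of $J$ into a $|r_j|\times|r_j|$ grid of squares (which again yields a $\mathcal{VH}$-complex with the same fundamental group), so that its copy of $c$ also has length $|c|\cdot|r_j|$. Since the copies of $J$ meet the rest of $J_\P$ only along the cylinders, they may be subdivided independently, and the $j$-th cylinder can then be tiled by squares with both boundary circles of length $|c|\cdot|r_j|$. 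This is what the paper's phrase ``subdivide the copies of $J$ and $V$'' is doing, and it avoids the parity issue entirely.
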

\begin{proof}
To prove (1), we subdivide the copies of $J$ and $V$ so that the attaching maps of the cylinders have the same length at either end. The gluing cylinders can then be subdivided into squares, giving the whole space the structure of a square complex, which is obviously $\mathcal{VH}$.

The map $\phi$ is defined to crush each copy of $J$ to a point; since a cylinder with one end identified with a point is homeomorphic to a disc, the resulting space is homeomorphic to $Y$.    The kernel of $\phi_*$ is normally generated by the copies of $\pi_1J$.  Since $\pi_1J$ has no finite quotients, if $f$ is a homomorphism from $\pi_1J_\P$ to a finite group, then $f$ kills the conjugacy classes defined by the loops in each copy of $J$, and therefore factors through $\phi_*$.  This proves (2).

Since $V$ is the entire 1-skeleton of $K$, item (3) is immediate.
\end{proof}

\begin{definition}
A \emph{pointed $\mathcal{VH}$-pair} is a triple $(L,V,*)$ where $L$ is a compact $\VH$-complex, $V$ is a vertical component of the 1-skeleton and $*$ is a vertex of $V$.
\end{definition}

\begin{theorem}\label{thm: Tech}
There is a recursive sequence of pointed $\mathcal{VH}$-pairs $(L_n,V_n,*_n)$ such that the following set  is recursively enumerable but not recursive:
\[
\{(n,\gamma)\mid n\in\N, \hat{\gamma}\neq 1\mathrm{~in~}\widehat{\pi_1L}_n\}
\]
where $\gamma$ runs through simple loops in $V_n$ based at $*_n$ and $\hat{\gamma}$ denotes the element of $\wh{\pi_1L}_n$ defined by $\gamma$.
\end{theorem}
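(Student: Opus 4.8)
The plan is to take the complexes produced in the proof of Theorem \ref{thm: NPC profinite triviality} with the choices that make them $\mathcal{VH}$-complexes: thus $L_n:=J_{\P_n}$, where $\P_n$ is the recursive sequence of finite presentations of groups $\Gamma_n$ supplied by Theorem \ref{thm: FP profinite triviality}, $J$ is the $\mathcal{VH}$-complex of Theorem \ref{thm: Wise CSC}, and $c$ is a simple loop in a vertical component of $J^{(1)}$. I would let $V_n$ be the subdivided copy of $K_n^{(1)}$ (the $1$-skeleton of the presentation complex of $\P_n$) sitting inside $L_n$, and $*_n$ the image of its unique original vertex. The first task is to check that $(L_n,V_n,*_n)$ really is a pointed $\mathcal{VH}$-pair. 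This follows from the construction used to prove Proposition \ref{prop: Properties of J}(1): the $\mathcal{VH}$-structure can be arranged so that the edges of $K_n^{(1)}$ are vertical (the gluing cylinders run from $c$, which is vertical in $J$, to the relator loops in $K_n^{(1)}$, with their $\mathbb{S}^1$-direction vertical and their $I$-direction horizontal), $V_n$ is connected, and no other vertical edge of $L_n$ meets $V_n$, since the remaining vertical edges lie in the copies of $J$ or in the interiors of the gluing cylinders, whose vertex sets are disjoint from that of $V_n$. Hence $V_n$ is a full vertical component containing $*_n$.

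Next I would translate the condition $\hat\gamma\neq1$ into group theory. By Proposition \ref{prop: Properties of J}(2) every homomorphism from $\pi_1L_n$ to a finite group factors through $\phi_*:\pi_1L_n\to\Gamma_n$, and by Proposition \ref{prop: Properties of J}(3) this map is surjective; combining these, for $g\in\pi_1L_n$ one has $\hat g=1$ in $\widehat{\pi_1L}_n$ if and only if $\phi_*(g)$ lies in the kernel of every finite quotient of $\Gamma_n$. Now a simple loop $\gamma$ in $V_n$ based at $*_n$ is, after forgetting the subdivision, one of the petals of the wedge of circles $K_n^{(1)}$ traversed in one of the two directions, so $\phi_*[\gamma]=(a_i^{(n)})^{\pm1}$ for one of the generators $a_i^{(n)}$ of $\P_n$. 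Therefore $\hat\gamma\neq1$ in $\widehat{\pi_1L}_n$ precisely when $a_i^{(n)}$ has non-trivial image in some finite quotient of $\Gamma_n$, and, after a recursive re-indexing of the (recursive) set of pairs $(n,\gamma)$, the set in the statement becomes
\[
S:=\{(n,i)\mid 1\le i\le k_n\text{ and }a_i^{(n)}\text{ survives in some finite quotient of }\Gamma_n\},
\]
where $k_n$ is the number of generators of $\P_n$.

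That $S$ is recursively enumerable is immediate: enumerate finite groups $Q$ together with the finitely many candidate maps $\Gamma_n\to Q$, check the relators, and accept $(n,i)$ as soon as some such map sends $a_i^{(n)}$ to a non-trivial element. For non-recursiveness I would use the key observation that $a_1^{(n)},\dots,a_{k_n}^{(n)}$ \emph{generate} $\Gamma_n$, so a finite quotient of $\Gamma_n$ is non-trivial exactly when it is non-trivial on at least one of them; consequently
\[
\{n\mid \exists\, i\le k_n:\ (n,i)\in S\}=\{n\mid\widehat{\Gamma}_n\neq1\}.
\]
As $k_n$ is a computable function of $n$, the left-hand side is obtained from $S$ by a single bounded existential quantifier, so it would be recursive if $S$ were. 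But by Theorem \ref{thm: FP profinite triviality} this set is not recursive, so $S$ is not recursive; hence the set in the statement is recursively enumerable but not recursive.

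The step I expect to be the main obstacle is not the final recursion-theoretic argument, which is short, but the verification in the first paragraph: one must fix the complex $J$, the loop $c$, and the subdivisions of Proposition \ref{prop: Properties of J} so that $V_n$ is genuinely a vertical component through $*_n$, and so that the correspondence between simple loops in $V_n$ and generators of $\P_n$ is unaffected by the subdivision used to turn the gluing cylinders into squares. This is routine, but it is where the geometry, as opposed to the formal bookkeeping, has to be pinned down carefully.
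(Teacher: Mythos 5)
Your proof is correct and takes essentially the same approach as the paper: both set $L_n=J_{\P_n}$, $V_n$ the vertical subgraph coming from $K_n^{(1)}$, use parts (2) and (3) of Proposition \ref{prop: Properties of J} to translate the survival of a loop in $\widehat{\pi_1L}_n$ into the survival of the corresponding generator of $\Gamma_n$ in a finite quotient, and then observe that deciding this for all (finitely many, effectively listable) simple loops would decide whether $\widehat{\Gamma}_n\neq 1$, contradicting Theorem \ref{thm: FP profinite triviality}. The only cosmetic difference is that you make the identification of simple based loops in $V_n$ with the oriented petals of $K_n^{(1)}$ completely explicit, whereas the paper phrases the same reduction in terms of an arbitrary finite generating set of simple loops for $\pi_1(V_n,*_n)$.
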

\begin{proof}
Let $\P_n$ and $\Gamma_n$ be as in Theorem \ref{thm: FP profinite triviality} and let $K_n$ be the presentation complex for $\P_n$.  Set $L_n=J_{\P_n}$, let $V_n$ be the vertical subgraph of $J_{\P_n}$ from Proposition \ref{prop: Properties of J} and let $*_n$ be the unique vertex of $V_n$.  Note that the construction of $(L_n,V_n,*_n)$ from $\P_n$ is algorithmic, and therefore the sequence is recursive.

Consider now a finite set of simple based loops $\gamma_1,\ldots,\gamma_k$ that generate $\pi_1(V_n,*_n)$.  If there were an algorithm to determine whether or not such a loop $\gamma$ survives in some quotient of $\pi_1(L_n,*_n)$ then, applying the algorithm to each $\gamma_i$ in turn, we would obtain an algorithm to determine whether or not $\pi_1(V_n,*_n)$ has non-trivial image in some finite quotient of $\pi_1(L_n,*_n)$. But the latter occurs if and only if $\Gamma_n$ has a non-trivial finite quotient (by items (2) and (3) of Proposition \ref{prop: Properties of J}), which is undecidable.

This proves that the set of pairs $(n,\gamma)$ for which $\gamma$ survives in some finite quotient of $\pi_1(L_n,*_n)$ is not recursive. However, if $\gamma$ \emph{does} survive in some finite quotient then a systematic search will eventually discover this. Therefore, the set of such $(n,\gamma)$ is recursively enumerable.
\end{proof}

It will be convenient in what follows to have a fixed enumeration of the pairs $(n,\gamma)$ with $\gamma$ a simple loop in $V_n$ based at $*_n$: denote this by $m\mapsto (n_m,\gamma_m)$.  Correspondingly,  we write $(L_m,V_m,*_m)$ in place of $(L_{n_m},V_{n_m},*_{n_m})$, so $\gamma_m$ is a simple loop in $V_m$ based at $*_m$.

\section{Proof of Theorem \ref{thm: Main theorem}}

In this final section, we modify the output of Theorem \ref{thm: Tech} in order to deduce Theorem \ref{thm: Main theorem}.  First we construct $L'_n$ from $L_n$ by attaching a vertical loop $\alpha$ (consisting of a single edge) at $*_n$.  We also let $V'_n=V_n\cup\alpha$.  For each simple based loop $\gamma$ in $V_n$ we define $\gamma'$ to be the concatenation $\gamma'\cdot\alpha$ in $L'_n$.  
  
Each loop $\gamma'_n$ is a local isometry
\[
\gamma'_n:\S^1\to L'_n
\]
which induces a cellular structure on $\S^1$, sending vertices to vertices and edges to edges.   To construct the complex $X_n$, take two copies of $L'_n$, and attach an annulus $A_n\cong \S^1\times I$, where the two ends of the cylinder $A_n$ are attached by the two copies of the map $\gamma'_n$.  The construction is illustrated in Figure \ref{fig: X}.

\begin{figure}[h]
\begin{center}
 \centering \def\svgwidth{300pt}
 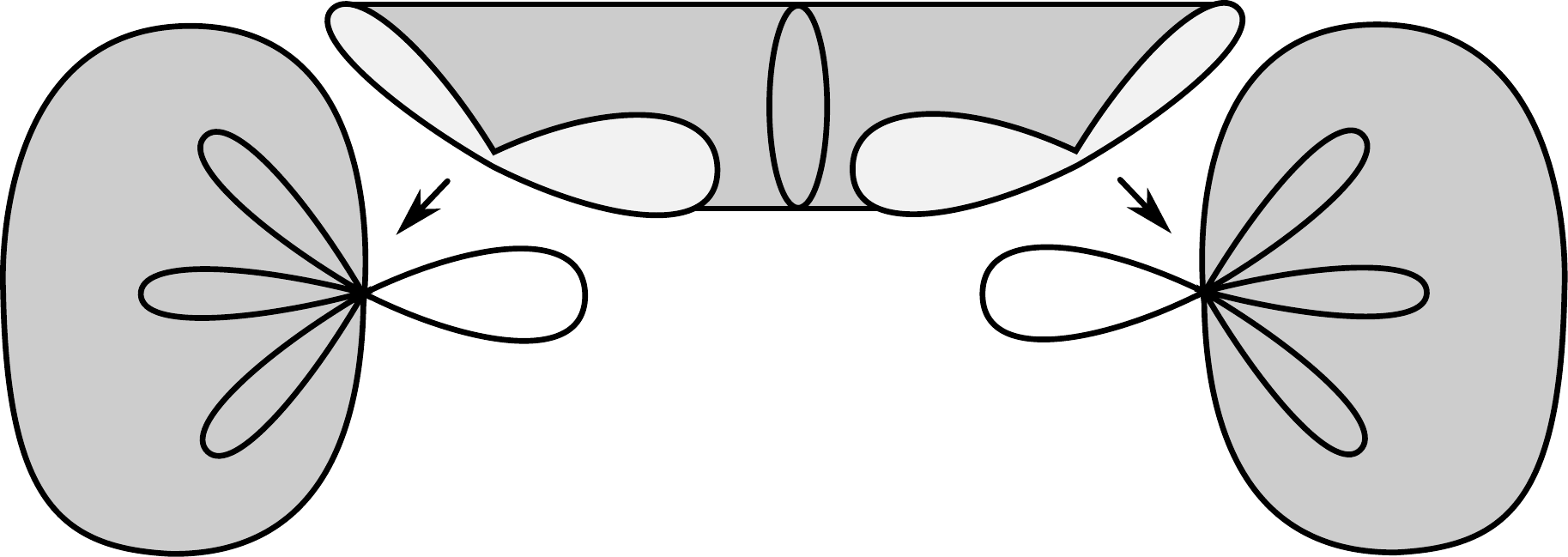
 \caption{The construction of the complex $X_n$.}
 \label{fig: X}
\end{center}
\end{figure}

Since the loops $\gamma'$ are vertical and locally geodesic, we can extend the $\mathcal{VH}$-structures of the two copies of $L'_n$  to a non-singular $\mathcal{VH}$-structure on $X_n$ by dividing the annulus $A_n$ into squares.

The hyperplane $Y_n$ is defined to be the unique vertical hyperplane in the annulus $A_n$. It is homeomorphic to a circle, and two-sided (i.e.\ non-singular) by definition. 

Our final lemma relates the geometry of $Y_n$ in finite-sheeted covers of $X_n$ to the group-theoretical properties of $\phi_*(\gamma_n)\in\Gamma_n$ (in the notation of Proposition \ref{prop: Properties of J} with $L_n=J_{\P_n}$).

\begin{lemma}\label{lem: Equivalence}
The following statements are equivalent.
\begin{enumerate}
\item The hyperplane $Y_n\subseteq X_n$ is virtually clean.
\item There is a based, finite-sheeted covering space $(R,\dagger)\to (L_n,*_n)$ so that the lift at $\dagger$  of $\gamma_n$ is not a loop.
\item There is a subgroup $N$ of finite index in $\pi_1L_n$ such that $\gamma_n\notin N$. 
\item The group element $[\gamma_n]$ survives in some finite quotient of $\pi_1L_n$.
\item The element $\phi_*([\gamma_n])$ survives in some finite quotient of $\Gamma_n$. 
\end{enumerate}
\end{lemma}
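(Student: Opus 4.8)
The plan is to treat the chain $(2)\Leftrightarrow(3)\Leftrightarrow(4)\Leftrightarrow(5)$ as routine covering-space and group theory and to concentrate the real work on the geometric equivalence $(1)\Leftrightarrow(2)$. For the easy part, $(2)\Leftrightarrow(3)$ is the standard correspondence between based finite covers $(R,\dagger)\to(L_n,*_n)$ and finite-index subgroups $N=p_*\pi_1(R,\dagger)\le\pi_1L_n$, under which the lift of $\gamma_n$ at $\dagger$ is a loop precisely when $[\gamma_n]\in N$; $(3)\Leftrightarrow(4)$ follows by passing to the normal core of $N$; and $(4)\Leftrightarrow(5)$ is where Proposition~\ref{prop: Properties of J} is used, since item~(2) there says that every homomorphism from $\pi_1L_n=\pi_1J_{\P_n}$ to a finite group factors through the map $\phi_*\colon\pi_1L_n\to\Gamma_n$, which is onto by item~(3); hence the finite quotients of $\pi_1L_n$ are exactly the pullbacks of finite quotients of $\Gamma_n$, and $[\gamma_n]$ survives in one of them iff $\phi_*([\gamma_n])$ survives in one of the latter.

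The first step for $(1)\Leftrightarrow(2)$ is to pin down the square structure near $Y_n$. I would verify that $Y_n$ is the core circle $\S^1\times\{1/2\}$ of the annulus $A_n=\S^1\times I$, that its carrier --- the union of the closed cells meeting it --- is all of $A_n$, and therefore that the two pushing maps $\partial_0,\partial_1\colon Y_n\to X_n$ are the two boundary circles of $A_n$, i.e.\ the loop $\gamma'_n$ in the first and in the second copy of $L'_n$. Thus $Y_n$ is clean iff $\gamma'_n\colon\S^1\to L'_n$ is an embedding, which it is not, as $\gamma'_n=\gamma_n\cdot\alpha$ passes through $*_n$ twice. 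More generally, for a finite cover $\widehat X_n\to X_n$ a component $\widehat A$ of the preimage of $A_n$ is an annulus covering $A_n$ with some degree $k$, it carries a lift $\widehat Y_n$ of $Y_n$ whose pushing maps are the two boundary circles of $\widehat A$, and each such circle is a lift of $(\gamma'_n)^k$ into the preimage of one of the copies of $L'_n$; so $\widehat Y_n$ is clean iff both these circles are embedded in $\widehat X_n$.

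With this in hand, $(1)\Rightarrow(2)$ is short. If $Y_n$ is virtually clean, pass to a finite cover with a clean lift $\widehat Y_n$; a pushing circle $C$ of $\widehat Y_n$ is then an embedded circle that runs alternately along lifts of $\gamma_n$ --- which lie in covers of $L_n$, obtained by restricting the ambient cover of $L'_n$ to its $L_n$-part --- and along lifts of the single edge $\alpha$. Were every lift of $\gamma_n$ to every finite cover of $L_n$ a loop, the first $\gamma_n$-subarc of $C$ would return to its starting vertex, so $C$ would meet a vertex at two distinct parameters, contradicting embeddedness. Hence some finite cover of $L_n$ carries a non-loop lift of $\gamma_n$, and taking that cover based at the relevant vertex gives $(2)$.

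The step $(2)\Rightarrow(1)$ is the one I expect to be the main obstacle, because it requires producing a \emph{single} finite cover of $X_n$ in which \emph{both} pushing maps of the relevant lift of $Y_n$ become embeddings at once. Starting from a based cover $(R,\dagger)\to(L_n,*_n)$ in which the lift $\widetilde\gamma_n$ of $\gamma_n$ is an embedded arc from $\dagger$ to a distinct vertex $\dagger'$ over $*_n$ --- it is an embedded arc because $\gamma_n$ is a simple loop, so the preimage of the circle $\gamma_n(\S^1)$ in $R$ is a disjoint union of embedded circles --- I would extend $R$ to a finite cover $R'$ of $L'_n=L_n\cup\alpha$ by choosing the lift of the loop-edge $\alpha$ to run from $\dagger'$ back to $\dagger$ (completing the lifts of $\alpha$ arbitrarily). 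Then $\gamma'_n=\gamma_n\alpha$ lifts at $\dagger$ to the concatenation $\widetilde\gamma_n\cdot\widetilde\alpha$, which is an \emph{embedded} loop --- and here $\dagger\neq\dagger'$ is exactly what prevents it from being a figure eight. To propagate this embedded loop to $X_n$, I would take the finite cover of $X_n$ associated with the subgroup $r_*^{-1}\big(p'_*\pi_1(R',\dagger)\big)\le\pi_1X_n$, where $r\colon X_n\to L'_n$ is the retraction crushing the $I$-factor of $A_n$ and identifying the second copy of $L'_n$ with the first; equivalently, glue two copies of $R'$ by annuli covering $A_n$, matched so that the two embedded loops $\widetilde{\gamma'_n}$ are joined by a degree-one annulus $\widehat A\cong A_n$. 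Since $r_*$ restricts to the identity on each vertex group and $\gamma'_n$ lies in $p'_*\pi_1(R',\dagger)$, the cover looks like $R'$ over both copies of $L'_n$ and $\widehat A\to A_n$ has degree one, so both pushing circles of the lift $\widehat Y_n\subset\widehat A$ are the embedded loop $\widetilde{\gamma'_n}$ and $\widehat Y_n$ is clean. Passing to a further finite normal cover then makes every lift of $Y_n$ clean, since a clean hyperplane lifts to a clean hyperplane and the deck group permutes the lifts transitively; this gives $(1)$.
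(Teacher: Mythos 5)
Your proof is correct and follows essentially the same route as the paper's. The only presentational difference is in $(2)\Rightarrow(1)$: you extend $R$ to a cover $R'$ of $L'_n$ by hand and pull back along $r\colon X_n\to L'_n$, whereas the paper pulls $R$ back directly along the composite retraction $\rho\colon X_n\to L'_n\to L_n$ (with $L'_n\to L_n$ sending $\alpha$ to $\overline{\gamma}_n$), which determines the $\alpha$-lifts automatically and yields the same cover.
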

\begin{proof}
We first show that (1) implies (2).  Suppose that $p:Z\to X_n$ is a finite-sheeted covering map and that some component $W$ of $p^{-1}Y_n$ is clean. Consider the pushing map $\partial_0:W\to Z$. Choose some $\dagger\in p^{-1}(*_n)$ that lies in $\partial_0(W)$.  Let $R$ be the component of $p^{-1}L_n$ that contains $\dagger$.  Because $\partial_0$ is an embedding, it follows that the lift of $\gamma_n$ at $\dagger$ is not a loop.

We next show that (2) implies (1); to this end, we define a certain retraction $\rho:X_n\to L_n$.  Consider the natural retraction $X_n\to L'_n$ which identifies the two copies of $L'_n$ and maps the gluing annulus via $\gamma'_n$.  Next, consider the retraction $L'_n\to L_n$ which maps $\alpha(t)$ to $\overline{\gamma}_n(t)$, where $\overline{\gamma}_n$ is the path $\gamma_n$ in the opposite direction. The concatenation
\[
X_n\to L'_n\to L_n
\]
defines the retraction $\rho$.  To show that (2) implies (1), we pull the covering map $R\to L_n$ back along $\rho$ to obtain a covering space $Z\to X_n$.  Let $W$ be the unique component of the preimage of $Y_n$ such that $\dagger$ is contained in $\partial_t(W)$ for some $t=0$ or $1$.  By construction, $W$ is a circle and $\partial_t$ is given by the lift of $\gamma'_n=\gamma_n\cdot\alpha$.  Since the lift of $\gamma_n$ is an embedded arc with distinct endpoints, the lift of $\gamma'_n$ is an embedded loop, as required.

Items (2) and (3) are equivalent by standard covering-space theory: in one direction, take $N=\pi_1(R,\dagger)$, and in the other take $(R,\dagger)$ to be the based covering space corresponding to the subgroup $N$.

The equivalence of items (3) and (4) is also standard.  To show that (4) implies (3), just take $N$ to be the kernel of the finite quotient map.  For the converse, the action of $\pi_1L_n$ by left translation on $\pi L_n/N$ defines a homomorphism $\pi_1L_n\to \mathrm{Sym}(\pi L_n/N)$ in which $\gamma_n$ survives.

Finally, that (4) and (5) are equivalent follows from item (2) of Proposition \ref{prop: Properties of J}.
\end{proof}

Lemma \ref{lem: Equivalence} tells us that deciding if the hyperplane $Y_n\subseteq X_n$ is virtually clean is equivalent to deciding if $(n,\gamma_n)$ lies in the set considered in Theorem \ref{thm: Tech}; and this set is recursively enumerable but not recursive. This completes the proof of Theorem \ref{thm: Main theorem}.

\bibliographystyle{plain}

\begin{thebibliography}{1}

\bibitem{agol_virtual_????}
Ian Agol.
\newblock Virtual properties of 3-manifolds.
\newblock
  \url{https://dl.dropboxusercontent.com/u/8592391/virtualspecialICM.pdf}.

\bibitem{agol_virtual_2013}
Ian Agol.
\newblock The virtual {H}aken conjecture.
\newblock {\em Doc. Math.}, 18:1045--1087, 2013.
\newblock With an appendix by Agol, Daniel Groves, and Jason Manning.

\bibitem{bridson_triviality_2015}
Martin~R. Bridson and Henry Wilton.
\newblock The triviality problem for profinite completions.
\newblock {\em Invent. Math.}, 202(2):839--874, 2015.

\bibitem{bridson_$scr_1999}
Martin~R. Bridson and Daniel~T. Wise.
\newblock $\mathscr{V}\mathscr{H}$ complexes, towers and subgroups of {$F\times
  F$}.
\newblock {\em Mathematical Proceedings of the Cambridge Philosophical
  Society}, 126(3):481--497, 1999.

\bibitem{burger_finitely_1997}
Marc Burger and Shahar Mozes.
\newblock Finitely presented simple groups and products of trees.
\newblock {\em C. R. Acad. Sci. Paris S\'er. I Math.}, 324(7):747--752, 1997.

\bibitem{haglund_special_2008}
Fr\'ed\'eric Haglund and Daniel~T. Wise.
\newblock Special cube complexes.
\newblock {\em Geometric and Functional Analysis}, 17(5):1551--1620, 2008.

\bibitem{wise_complete_2007}
Daniel~T. Wise.
\newblock Complete square complexes.
\newblock {\em Comment. Math. Helv.}, 82(4):683--724, 2007.

\end{thebibliography}

\end{document}